\pdfoutput=1
\RequirePackage{ifpdf}
\ifpdf
\documentclass[pdftex]{sigma}
\else
\documentclass{sigma}
\fi

\numberwithin{equation}{section}

\newtheorem{thm}{Theorem}[section]

\newtheorem{lem}[thm]{Lemma}
 { \theoremstyle{definition}
\newtheorem{Remark}[thm]{Remark} }

\def\tr{\operatorname{tr}}
\def\bC{\mathbf{\overline{C}}}
\def\C{\mathbf{C}}

\def\lc{,\allowbreak\ldots,\allowbreak}
\let\ds\displaystyle
\def\Res{\mathop{\mathrm{Res}}\limits}

\def\yt{\tilde y}

\def\Wr{\mathop{\mathrm{Wr\vrule height0pt width.6pt}}\nolimits}

\begin{document}
%\allowdisplaybreaks

\newcommand{\arXivNumber}{1801.08529}

\renewcommand{\PaperNumber}{058}

\FirstPageHeading

\ShortArticleName{Fuchsian Equations with Three Non-Apparent Singularities}

\ArticleName{Fuchsian Equations with Three Non-Apparent\\ Singularities}

\Author{Alexandre EREMENKO~$^\dag$ and Vitaly TARASOV~$^{\ddag\S}$}
\AuthorNameForHeading{A.~Eremenko and V.~Tarasov}
\Address{$^\dag$~Purdue University, West Lafayette, IN 47907, USA}
\EmailD{\href{mailto:eremenkos@math.purdue.edu}{eremenko@math.purdue.edu}}
\URLaddressD{\url{http://www.math.purdue.edu/~eremenko/}}
\Address{$^\ddag$~Indiana University -- Purdue University Indianapolis, Indianapolis, IN 46202, USA}
\EmailD{\href{mailto:vtarasov@iupui.edu}{vtarasov@iupui.edu}}
\Address{$^\S$~St.~Petersburg Branch of Steklov Mathematical Institute, St.~Petersburg, 191023, Russia}
\EmailD{\href{mailto:vt@pdmi.ras.ru}{vt@pdmi.ras.ru}}

\ArticleDates{Received February 02, 2018, in final form June 10, 2018; Published online June 15, 2018}

\Abstract{We show that for every second order Fuchsian linear differential equation $E$ with~$n$ singularities of which $n-3$ are apparent there exists a hypergeometric equation~$H$ and a linear differential operator with polynomial coefficients which maps the space of solutions of~$H$ into the space of solutions of~$E$. This map is surjective for generic parameters. This justifies one statement of Klein (1905). We also count the number of such equations~$E$ with prescribed singularities and exponents. We apply these results to the description of conformal metrics of curvature~$1$ on the punctured sphere with conic singularities, all but three of them having integer angles.}

\Keywords{Fuchsian equations; hypergeometric equation; difference equations; apparent singularities; bispectral duality; positive curvature; conic singularities}

\Classification{34M03; 34M35; 57M50}

\section{Introduction}

We consider Fuchsian linear differential equations of second order such that all but three singularities are apparent. A singularity is called apparent if the ratio of two linearly independent solutions is meromorphic at the singularity. Our main result says that all such equations can be obtained from hypergeometric equations by certain linear differential operators. The result was hinted by Klein at the end of his lecture on November~29, 1905 \cite{K}, without a proof or a precise formulation. F.~Shilling \cite{S} investigated the case of one apparent singularity with exponent difference~$2$.

Our motivation is the following. In \cite{E,EGT1,EGT}, we studied conformal Riemannian metrics of constant curvature $1$ on the punctured sphere with conic singularities at the punctures. The question is how many such metrics with prescribed singularities and prescribed conic angles at the singularities exist, and how to describe all of them. Unlike the similar problem with non-positive curvature where a complete answer is known \cite{H, P,T}, this problem is wide open, see, for example the survey in the introduction to~\cite{EGT1}. The developing map of such a metric is a ratio of two linearly independent solutions of a Fuchsian differential equation (see Section~\ref{section6}). Singularities of this equation are the conic singularities of the metric, and the exponent difference $\alpha$ at a singularity corresponds to the conic angle $2\pi\alpha$ of the metric. Another condition on this equation is that the projective monodromy group is conjugate to a subgroup of ${\rm PSU}(2)={\rm SU}(2)/\{\pm I\}$. We call such projective monodromy groups {\em unitarizable}.

So the question about the metrics is equivalent to the following:

{\em For given $a_j\in\bC$ and positive $\alpha_j$, $1\leq j\leq n$, how many Fuchsian differential equations of the form
\begin{gather}\label{11}
w''+\biggl(\sum_{j=1}^{n-1}\frac{1-\alpha_j}{z-a_j}\biggr)w'+\frac{Az^{n-3}+\cdots}{(z-a_1)\cdots(z-a_{n-1})}w=0
\end{gather}
with unitarizable projective monodromy exist?}

Here we assume without loss of generality that $a_n=\infty$ and the prescribed exponent difference at $\infty$ is $\alpha_n$; together with the Fuchs relation this determines the constant $A$. The dots in the numerator of the third summand of \eqref{11} stand for a polynomial of degree $n-4$ whose $n-3$ coefficients are called the {\em accessory parameters}. These accessory parameters must be determined from the condition that the projective monodromy is unitarizable. This gives a~system of equations on these parameters, and the question is how many solutions it has.

The difficulty of this problem strongly depends on the number of non-integer\footnote{Here and in what follows ``integer angle'' means an integer multiple of $2\pi$ radians.} angles $\alpha_j$. The condition of unitary projective monodromy implies that at a singular point with integer exponent difference the projective monodromy must be trivial, that is this singular point must be apparent.

A complete solution of this problem is only known when $n=3$, \cite{E, FKKRUY}. The case when $n$ is arbitrary and all angles are integers was studied in \cite{Sch}, and the case where there are only two non-integer
angles in~\cite{EGT1}. The case when $n=4$ and there are three non-integer angles was investigated in~\cite{EGT} under the additional assumption that equation \eqref{11} is {\em real}. In terms of the metric this means that the metric is symmetric with respect to some circle which contains all singularities. The problem of description of symmetric metrics is equivalent to the problem of classification of spherical $n$-gons of prescribed conformal type with prescribed angles, and this was the main subject of Klein's lectures~\cite{K}.

Theorem~\ref{theorem3} of this paper gives a classification of metrics of constant positive curvature with conic singularities on the sphere in the case when only three singularities have non-integer angles, for generic singularities and generic non-integer angles.

The word {\em generic} is used in this paper in the sense of Zariski topology: a set in $\C^n$ is generic if its complement is contained is a proper algebraic subset.

\section{Statement of results}\label{section2}

Let $a_j$ be distinct complex numbers, $a_j\not\in\{0,1\}$ for $1\leq j\leq k$. Let $m_j$ be positive integers, and $\alpha$, $\beta$, $\gamma$, $\delta$ complex numbers. Consider the differential equation
\begin{gather}\label{1a}
y''-\left( \frac\alpha z+\frac\beta{z-1}+\sum_{i=1}^k \frac{m_i}{z-a_i}\right) y'+\frac{\gamma\delta z^k+\cdots{}}{z(z-1)\prod\limits_{i=1}^k(z-a_i)}y = 0,
\end{gather}
where $\gamma+\delta=\alpha+\beta+1+\sum\limits_{j=1}^km_j$ (Fuchs's condition). The Riemann scheme of this equation is
\begin{gather*}
\left(\begin{matrix}
0&1&a_1&\ldots&a_k&\infty\\
0&0&0&\ldots&0&-\gamma\\
\alpha+1&\beta+1&m_1+1&\ldots&m_k+1&-\delta
\end{matrix}\right).
\end{gather*}
We recall that the first line of the Riemann scheme contains the singularities, and the other two the exponents at each singularity~\cite{Mellin}. Set $d=\sum\limits_{j=1}^km_j$. Assume that the singularities at \smash{$a_1\lc a_k$} are all apparent and
\begin{gather}\label{0d}
\gamma, \delta, \gamma-\alpha-1, \delta-\alpha-1 \not\in \{0,1\lc d-1\}.
\end{gather}

\begin{thm}\label{theorem1}
Under these conditions, there exists a polynomial $Q$ of degree $d=\sum\limits_{j=1}^km_j$ such that every solution of \eqref{1a} has the form $Q\big(z\frac{{\rm d}}{{\rm d}z}\big)F(z) $, where $F$ is a solution of the hypergeometric equation
\begin{gather}\label{2a}
F''-\left( \frac\alpha z+\frac{\beta+d}{z-1} \right) F'+\frac{\gamma\delta}{z(z-1)} F = 0.
\end{gather}
In particular, the monodromy group of equation \eqref{1a} coincides with that of the hypergeometric equation~\eqref{2a}.
\end{thm}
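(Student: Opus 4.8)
The plan is to realize $Q\big(z\frac{{\rm d}}{{\rm d}z}\big)$ as an intertwiner between the two solution spaces and to fix $Q$ by an analysis of exponents. Write $\theta=z\frac{{\rm d}}{{\rm d}z}$, so that $\theta z^{s}=s\,z^{s}$ and hence $Q(\theta)z^{s}=Q(s)z^{s}$; the point of using $\theta$ rather than a general operator is that it is adapted to the two points $0$ and $\infty$. First I would record the Riemann scheme of \eqref{2a}: its singular points are $0,1,\infty$ with exponents $\{0,\alpha+1\}$, $\{0,\beta+d+1\}$, $\{-\gamma,-\delta\}$. These agree with those of \eqref{1a} at $0$ and $\infty$, differ at $1$ by exactly $d$, while \eqref{1a} carries in addition the apparent points $a_1\lc a_k$. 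Denoting by $V_2$ and $V_1$ the solution spaces of \eqref{2a} and \eqref{1a}, the goal becomes to choose a degree-$d$ polynomial $Q$ with $Q(\theta)V_2\subseteq V_1$.

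The next step is a local computation at each singular point. At $0$ (and symmetrically at $\infty$) the operator $Q(\theta)$ multiplies a Frobenius solution of exponent $\sigma$ by $Q(\sigma)$, hence preserves the exponent pairs $\{0,\alpha+1\}$ and $\{-\gamma,-\delta\}$; condition \eqref{0d}, which prevents either Frobenius solution at $0$ from terminating into a low-degree polynomial, guarantees that $Q(\theta)$ is injective on $V_2$ and creates no spurious vanishing. At $z=1$ I would pass to $t=z-1$, where $\theta=(1+t)\frac{{\rm d}}{{\rm d}t}$ acts on $t^{\sigma}$ by lowering the leading exponent by one; a degree-$d$ operator therefore sends $\beta+d+1$ down to $\beta+1$, the surviving coefficient being the product $(\beta+2)\cdots(\beta+d+1)$, so the image acquires exactly the exponents $\{0,\beta+1\}$ of \eqref{1a} at $1$. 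Finally, because $Q(\theta)$ has polynomial coefficients, $Q(\theta)F$ is holomorphic wherever $F$ is, in particular at every $a_i$. In parallel I would feed $b_n=Q(n)c_n$, where $c_n$ are the two-term coefficients of $F$, into the hypergeometric recurrence; this yields the closed annihilator $\big[P(\theta)Q(\theta-1)-z\,Q(\theta+1)S(\theta)\big]y=0$ with $P(\theta)=\theta(\theta-\alpha-1)$ and $S(\theta)=(\theta-\gamma)(\theta-\delta)$, exhibiting $y=Q(\theta)F$ as a solution of a generalized hypergeometric equation of order $d+2$ with singular set $\{0,1,\infty\}$.

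These facts show that $V:=Q(\theta)V_2$ is a two-dimensional, monodromy-invariant space (the coefficients of $Q(\theta)$ being single-valued), hence the solution space of a unique monic second-order Fuchsian equation $\tilde E$ whose Riemann scheme coincides with that of \eqref{1a} at $0,1,\infty$ and which is holomorphic, so at worst apparent, at the remaining finite points. Its first-order coefficient is $-\Wr(V)'/\Wr(V)$, and by Abel's identity together with the exponents just computed one has $\Wr(V)=C\,z^{\alpha}(z-1)^{\beta}\prod_i(z-a_i)^{m_i}$ precisely when the quotient $\Wr(V)/\Wr(V_2)$, which by the transformation rule for Wronskians under a differential operator is a rational function explicit in the $d$ roots of $Q$, equals $\prod_i(z-a_i)^{m_i}/(z-1)^{d}$. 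I would therefore determine the $d$ roots of $Q$ by requiring $\Wr(V)$ to vanish to order exactly $m_i$ at each $a_i$, which is $\sum m_i=d$ conditions on the $d$ free roots; the exponent bookkeeping (e.g.\ the total $1-d+\sum(m_i+1)=k+1$) is consistent by the Fuchs relation.

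The main obstacle is the final identification $\tilde E=\eqref{1a}$. Matching Wronskians only forces the first-order coefficients of $\tilde E$ and \eqref{1a} to agree; the zeroth-order (accessory) coefficient still has $k$ degrees of freedom constrained by the apparentness conditions at $a_1\lc a_k$, and one must show these conditions, together with the shared Riemann scheme, single out \eqref{1a} rather than merely some equation with the same exponents and the same apparent points. Concretely, I expect the hard part to be proving that the apparentness of \eqref{1a} is exactly what makes the order-$(d+2)$ operator displayed above admit \eqref{1a} as a right factor, so that $V_1\subseteq V$ and hence, by the coincidence of local exponents, $V=V_1$. Condition \eqref{0d} should enter here to exclude the logarithmic resonances and accidental coincidences of exponents that could otherwise separate two second-order equations sharing the same Riemann data; granting the control of these degeneracies, every $Q(\theta)F$ is a solution of \eqref{1a}, and the coincidence of monodromy follows because $Q(\theta)$ commutes with analytic continuation.
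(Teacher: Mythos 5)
You correctly flag the main obstacle yourself, and it is fatal as the argument stands: your polynomial $Q$ is constructed from the data $(a_1,\ldots,a_k;\,m_1,\ldots,m_k;\,\alpha,\beta,\gamma,\delta)$ alone --- its roots are chosen so that the Wronskian of the image space vanishes to order $m_i$ at each $a_i$ --- and it never sees the accessory parameters of the particular equation \eqref{1a} you must solve. But by Theorem~\ref{theorem2} there are, for generic data, exactly $\prod_{j=1}^k(m_j+1)$ distinct equations of the form \eqref{1a} with the same singular points, the same exponents, and apparent singularities at all $a_j$; they differ only in their accessory parameters. Hence ``same Riemann scheme plus apparentness at each $a_i$'' cannot single out the given equation, and your construction at best produces \emph{some} member $\tilde E$ of this finite family. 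To complete the proof you would have to show that, as $Q$ runs over all solutions of your nonlinear system of $d$ conditions on its $d$ roots, the resulting $\tilde E$ sweeps out the whole family --- a surjectivity statement essentially equivalent to the theorem itself, for which you give no argument. (Condition \eqref{0d}, which you invoke at this point, controls injectivity of $Q\big(z\frac{{\rm d}}{{\rm d}z}\big)$ on the solution space of \eqref{2a} --- that is its role in the paper, cf.\ Remark~\ref{remark2} --- but it says nothing about \emph{which} second-order equation the image space satisfies.) A secondary gap: the existence of even one admissible $Q$ rests on matching $d$ conditions against $d$ unknown roots; that system is nonlinear, so the count of equations versus unknowns proves nothing by itself.

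The paper sidesteps the identification problem entirely by running the construction in the opposite direction: $Q$ is built \emph{from} the given equation, not from its Riemann data. For a solution $y$ of \eqref{1a} one forms the residues $u_i(x)$ at the apparent singularities of $y(z)z^{x-\alpha-2}(z-1)^{-\beta-1}\prod_{j}(z-a_j)^{-m_j-1}$, which have the form $u_i(x)=a_i^x p_i(x)$ with $\deg p_i=m_i$; one shows the $u_i$ solve the bispectral dual difference equation of \eqref{1a}, and sets $Q(x)=\det\bigl(a_i^{j-1}p_i(x+j)\bigr)_{i,j=1}^k$, a Casorati determinant. Lemma~\ref{L4} and the series correspondence \eqref{8a} then show directly that $Q\big(z\frac{{\rm d}}{{\rm d}z}\big)$ maps the two standard hypergeometric solutions of \eqref{2a} to solutions of the original equation \eqref{1a}, with no need to recognize a second-order right factor afterwards; since $Q$ is manufactured from the accessory parameters themselves, the theorem follows from injectivity (Remark~\ref{remark2}) and a deformation to generic $\alpha$, $\gamma$, $\delta$. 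Your exponent bookkeeping and the annihilator identity $\bigl[P(\theta)Q(\theta-1)-zQ(\theta+1)S(\theta)\bigr]y=0$ are correct and could seed an alternative proof, but only if supplemented by the missing surjectivity argument, for instance by a counting or degeneration argument in the spirit of the proof of Theorem~\ref{theorem2}.
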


\begin{Remark}\label{remark1} Clearly, conditions \eqref{0d} imply that for any non-zero polynomial $p$ of degree at most $d-1$, the functions $p(z)$ and $z^{\alpha+1}p(z)$ are not solutions of the hypergeometric equation~\eqref{2a}.
\end{Remark}

\begin{Remark}\label{remark2} Under conditions \eqref{0d}, for any non-zero polynomial $Q$ of degree at most $d$ and any non-zero solution $F$ of \eqref{2a}, the function $Q\big(z\frac{{\rm d}}{{\rm d}z}\big)F(z)$ is non-zero. Indeed, a solution $F$ of~\eqref{2a} such that $Q\big(z\frac{{\rm d}}{{\rm d}z}\big)F(z)\equiv 0$, has the following form: there are polynomials $p_1$, $p_2$ of degrees at most $d-1$ and constants $A,B$, such that
\begin{alignat*}{3}
& F(z) = p_1(z) + z^\alpha p_2(z)\qquad && \hbox{\rm if $\alpha$ is not an integer},&\\
& F(z) = p_1(z) + z^\alpha p_2(z) (A+B\log z)\qquad && \hbox{\rm if $\alpha$ is a non-negative integer},&\\
& F(z) = z^\alpha p_1(z) + p_2(z) (A+B\log z)\qquad && \hbox{\rm if $\alpha$ is a negative integer}. &
\end{alignat*}
Then in the first case, both $p_1(z)$ and $z^\alpha p_2(z)$ are solutions of equation~\eqref{2a} in contradiction with Remark~\ref{remark1}. In the second case, $z^\alpha p_2(z)$ is a solution of \eqref{2a} if $B\ne 0$, while for $B=0$, $F(z)$ is a polynomial and $ \deg F<d$ since $Q\big(z\frac{{\rm d}}{{\rm d}z}\big)F(z)=0$. Both options contradict Remark~\ref{remark1}. The third case can be worked out similarly.
\end{Remark}

\begin{thm}\label{theorem2}
For every $\alpha$, $\beta$, $\gamma$, $\delta$, $m_1\lc m_k$, $a_1\lc a_k$, there exist at least one and at most $\prod\limits_{j=1}^k(m_j+1)$ equations~\eqref{1a} with apparent singularities at all points $a_j$. For generic $\alpha$, $\beta$, $\gamma$, $\delta$, $a_1\lc a_k$, the equality holds.
\end{thm}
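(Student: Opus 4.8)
The plan is to coordinatize the equations \eqref{1a} having the prescribed Riemann scheme by their accessory parameters and to read off apparency as a system of polynomial equations, which I would then count by B\'ezout's theorem. Writing the numerator as $P(z)=\gamma\delta z^k+c_{k-1}z^{k-1}+\cdots+c_0$, the prescribed exponents at $\infty$ fix the leading coefficient $\gamma\delta$, so the admissible equations are parametrized by $c=(c_0\lc c_{k-1})\in\C^k$. I would first localize at a single point $a_i$. Because the exponents there are $0$ and $m_i+1$, the solution with exponent $m_i+1$ is automatically holomorphic, and $a_i$ is apparent precisely when the Frobenius recursion for the exponent-$0$ solution $y=\sum_{n\ge0}a_n(z-a_i)^n$ has no obstruction at the resonance $n=m_i+1$. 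This is a single polynomial equation $\Phi_i(c)=0$, so the locus we want is cut out by the $k$ equations $\Phi_1=\cdots=\Phi_k=0$ in $\C^k$.

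The heart of the argument is to control the top-degree part of each $\Phi_i$. Setting $r_i=\operatorname{Res}_{a_i}q$ for the residue at $a_i$ of the coefficient $q$ of $y$ in \eqref{1a}, the recursion takes the form $I(n)a_n=-r_i a_{n-1}+(\text{terms of lower degree in }c)$, where $I(n)=n(n-1-m_i)$, because the Taylor coefficients of the holomorphic part of $q$ at $a_i$ are affine-linear in $c$ while those of the fixed coefficient of $y'$ are constant. A degree count then shows that $a_n$ is a polynomial of degree at most $n$ in $c$ whose degree-$n$ part is $(-\ell_i)^n\big/\prod_{l=1}^{n}I(l)$, where $\ell_i$ is the linear part of $r_i$; consequently $\Phi_i$ has degree exactly $m_i+1$ with leading form a nonzero multiple of $\ell_i^{\,m_i+1}$. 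Since $r_i=P(a_i)\big/\big(a_i(a_i-1)\prod_{l\ne i}(a_i-a_l)\big)$, the linear forms $\ell_1\lc\ell_k$ are nonzero multiples of the rows of the Vandermonde matrix $\big(a_i^{\,j}\big)_{1\le i\le k,\,0\le j\le k-1}$, hence linearly independent because the $a_i$ are distinct.

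It follows that the leading forms $\ell_1^{\,m_1+1}\lc\ell_k^{\,m_k+1}$ have no common zero other than $c=0$, so the projective closure of $\{\Phi_1=\cdots=\Phi_k=0\}$ does not meet the hyperplane at infinity. B\'ezout's theorem then shows that, for every admissible choice of the parameters, the system has exactly $\prod_{i=1}^{k}(m_i+1)$ solutions in $\C^k$, counted with multiplicity and all finite. This at once gives the upper bound, since the number of distinct solutions cannot exceed the count with multiplicity, and the existence of at least one equation, since that count is positive and no solution runs off to infinity.

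The remaining point is generic simplicity: I must show that for generic $(\alpha,\beta,\gamma,\delta,a_1\lc a_k)$ the $\prod(m_i+1)$ solutions are distinct, equivalently that the zero-dimensional scheme they define is generically reduced. This holds as soon as one exhibits a single admissible configuration with pairwise distinct solutions, and I expect this to be the main obstacle. The cleanest route seems to be through Theorem~\ref{theorem1}: its correspondence identifies the equations \eqref{1a} with apparent singularities with polynomial operators $Q\big(z\frac{\mathrm d}{\mathrm dz}\big)$ of degree $d$ acting on \eqref{2a}, reducing the count to a difference (Bethe-type) problem whose solutions can be checked to be generically distinct; alternatively one can analyze a limit in which the $a_i$ separate and the leading terms $\ell_i^{\,m_i+1}$ dominate, forcing the solutions apart.
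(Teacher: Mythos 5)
Your B\'ezout argument for the existence statement and the upper bound is correct, and it is in substance the paper's own argument in different coordinates. The paper passes to the normalization \eqref{1b} and uses the residues $\beta_3\lc\beta_{n-1}$ as unknowns; there the apparency conditions take the form \eqref{equa}, $\beta_j^{\ell_j}=S_j$ with $\deg S_j\leq \ell_j-1$, so the top-degree forms are the diagonal powers $\beta_j^{\ell_j}$ and the absence of projective solutions at infinity is immediate, after which B\'ezout gives exactly $\prod\limits_{j=1}^k(m_j+1)$ solutions with multiplicity. Your computation of the Frobenius obstruction $\Phi_i$ — degree $m_i+1$ with leading form a nonzero multiple of $\ell_i^{m_i+1}$, the $\ell_i$ being independent Vandermonde-type linear forms in the accessory parameters — achieves the same conclusion in the original coordinates of \eqref{1a}, and your induction on the recursion is sound.

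The genuine gap is the generic equality, which you acknowledge but do not prove. Your reduction of it to exhibiting a single parameter value with $\prod\limits_{j=1}^k(m_j+1)$ pairwise distinct solutions is legitimate (openness of the locus of simple solutions follows from the properness you established, since no solutions escape to infinity for any parameter value), but neither of your two suggested routes is carried out, and the first one is logically dangerous: in the paper, the proof of Theorem~\ref{theorem1} itself invokes the proof of Theorem~\ref{theorem2} (the deformation of \eqref{1a} to generic $\alpha$, $\beta$, $\gamma$, $\delta$ keeping all singularities apparent), so using Theorem~\ref{theorem1} to finish Theorem~\ref{theorem2} risks circularity; moreover, Theorem~\ref{theorem1} only attaches a polynomial $Q$ to each equation, and turning that into a count of distinct equations requires the Bethe-ansatz enumeration of \cite{MTV}, which you merely gesture at. The paper closes the gap differently and concretely: it specializes $\alpha+1$, $\beta+1$, $\gamma-\delta$ to large integers, so that equations with all singularities apparent correspond to rational functions with prescribed critical points; the result of \cite{EGSV}, via the Young tableaux count of Lemma~\ref{lem2}, then produces at least $\prod\limits_{j=1}^k(m_j+1)$ \emph{distinct} solutions, which together with the B\'ezout bound forces all solutions to be simple, and Zariski density of such integer parameter choices yields genericity. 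Without some argument of this kind (or a completed version of your degeneration idea, which as stated gives no control on multiplicities), the second half of the theorem remains unproved.
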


\begin{thm}\label{theorem3} Let positive numbers $(\alpha_1\lc\alpha_n)$ be given, where $\alpha_1$, $\alpha_2$, $\alpha_3$ are not integers, and all combinations
\begin{gather}\label{coaxial}
\alpha_1\pm\alpha_2\pm\alpha_3\quad\mbox{are not integers}
\end{gather}
for any choice of signs, while $\alpha_4\lc\alpha_n$ are integers, $\alpha_j\geq 2$. Let the points $(z_1\lc z_n)$ in~$\bC$ be given. Then the necessary and sufficient condition of existence of a conformal metric of curvature $1$ with conic singularities at $z_j$ with angles $2\pi\alpha_j$ is
\begin{gather}\label{cond}
\cos^2 \pi\alpha_1+\cos^2 \pi\alpha_2+\cos^2 \pi\alpha_3+2(-1)^\sigma \cos\pi\alpha_1 \cos\pi\alpha_2 \cos\pi\alpha_3 < 1,
\end{gather}
where
\begin{gather*}
\sigma =\sum_{j=4}^n(\alpha_j-1).
\end{gather*}
The number of such distinct metrics does not exceed the product $\alpha_4\cdots\alpha_n$, with equality for generic $\alpha_1$, $\alpha_2$, $\alpha_3$ and $z_j$.
\end{thm}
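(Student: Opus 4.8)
The plan is to reduce the geometric problem to a monodromy computation via Theorem~\ref{theorem1} and then to apply the classical ${\rm SU}(2)$-unitarizability criterion for hypergeometric monodromy. Recall (from the introduction and Section~\ref{section6}) that a metric of curvature~$1$ with conic angles $2\pi\alpha_j$ at $z_j$ exists if and only if there is a Fuchsian equation~\eqref{11} with these singularities and exponent differences whose projective monodromy is unitarizable. Normalizing $z_1,z_2,z_3$ to $0,1,\infty$ brings this equation to the form~\eqref{1a} with $k=n-3$ and $m_i+1=\alpha_{i+3}$, so that $d=\sum m_i=\sigma$ and $\prod_{i=1}^k(m_i+1)=\alpha_4\cdots\alpha_n$. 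Since $\alpha_4\lc\alpha_n$ are integers $\geq2$, unitarizability forces the projective monodromy at these points to be trivial, i.e.\ $z_4\lc z_n$ must be apparent. Hence only equations~\eqref{1a} with apparent singularities at $a_1\lc a_k$ are relevant, and Theorem~\ref{theorem2} provides at least one and at most $\alpha_4\cdots\alpha_n$ of them, with equality for generic data.

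The next step is Theorem~\ref{theorem1}: each such equation has the same monodromy as the single hypergeometric equation~\eqref{2a}, whose exponent differences at $0,1,\infty$ are $\alpha_1$, $\alpha_2+d$, $\alpha_3$. Since~\eqref{2a} does not depend on the accessory parameters, all of these (up to $\alpha_4\cdots\alpha_n$) equations share one and the same projective monodromy; thus either all are unitarizable or none are. This already yields both assertions: existence of a metric is equivalent to unitarizability of~\eqref{2a}, and when it holds the number of metrics equals the number of equations~\eqref{1a} with apparent singularities, i.e.\ at most $\alpha_4\cdots\alpha_n$, with generic equality.

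It remains to convert unitarizability of~\eqref{2a} into~\eqref{cond}. Write the projective monodromy as generated by $M_0,M_1$, with $M_\infty$ determined by the product relation. The combinations $\alpha_1\pm(\alpha_2+d)\pm\alpha_3$ differ from $\alpha_1\pm\alpha_2\pm\alpha_3$ by the integer~$d$, so~\eqref{coaxial} guarantees that none of them is an integer and the representation is irreducible. For an irreducible ${\rm SL}(2,\C)$ representation with real elliptic traces, conjugacy into ${\rm SU}(2)$ holds precisely when the commutator trace is $<2$, that is $\tau_0^2+\tau_1^2+\tau_{01}^2-\tau_0\tau_1\tau_{01}<4$, with $\tau_0=2\cos\pi\alpha_1$ and $\tau_1=2\cos\pi(\alpha_2+d)=2(-1)^\sigma\cos\pi\alpha_2$. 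Normalizing the three matrices to ${\rm SL}(2,\C)$, the exponents of~\eqref{2a} sum to~$1$, so $M_0M_1M_\infty=-I$ and hence $\tau_{01}=\tr(M_0M_1)=-2\cos\pi\alpha_3$. Substituting these values and simplifying turns the commutator inequality into exactly~\eqref{cond}.

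I expect the last sign bookkeeping to be the main obstacle. The factor $(-1)^\sigma$ in~\eqref{cond} comes from two independent places that must be combined correctly: the integer shift $d=\sigma$ in the exponent difference at~$1$ produced by the apparent singularities through Theorem~\ref{theorem1}, giving $\cos\pi(\alpha_2+d)=(-1)^\sigma\cos\pi\alpha_2$, and the normalization sign in $M_0M_1M_\infty=-I$, which fixes the sign of~$\tau_{01}$. Getting both right is exactly what distinguishes~\eqref{cond} from the version with the opposite cross-term sign, and is the one genuinely delicate point of the proof.
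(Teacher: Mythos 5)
Your proposal is correct and its skeleton is the same as the paper's: unitarity forces the integer-angle singularities to be apparent, Theorem~\ref{theorem2} bounds (and generically counts) the resulting equations~\eqref{1a}, Theorem~\ref{theorem1} identifies their common monodromy with that of the single hypergeometric equation~\eqref{2a}, and everything reduces to unitarizability of that one group. Where you diverge is the final step. The paper simply cites \cite{E,FKKRUY} for the fact that hypergeometric monodromy with exponent differences $\alpha_1$, $\alpha_2+\sigma$, $\alpha_3$ is unitarizable exactly when \eqref{cond} holds (the factor $(-1)^\sigma$ coming from $\cos\pi(\alpha_2+\sigma)=(-1)^\sigma\cos\pi\alpha_2$), whereas you re-derive this from the ${\rm SL}(2)$ trace inequality $t_1^2+t_2^2+t_3^2-t_1t_2t_3<4$. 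That is precisely the route of the paper's closing Remark (a referee's alternative argument via \cite[Theorem~3.7]{BS} and condition~\eqref{u}), except that the Remark applies the inequality directly to the monodromy matrices of the original equation with Riemann scheme~\eqref{rs}, bypassing Theorem~\ref{theorem1} altogether, while you apply it to~\eqref{2a} after Theorem~\ref{theorem1}; so your proof is a hybrid of the two arguments in the paper. Your bookkeeping does come out right: your lifts give traces $2\cos\pi\alpha_1$, $2(-1)^\sigma\cos\pi\alpha_2$, $-2\cos\pi\alpha_3$, and the inequality then reduces exactly to \eqref{cond}; and your use of \eqref{coaxial} to secure irreducibility (needed for the trace criterion) is sound.

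One step you leave implicit is genuinely needed for the count of metrics: why does each equation~\eqref{1a} with apparent singularities and unitarizable monodromy define exactly \emph{one} metric? Two developing maps coming from the same equation are related by $f_1=L\circ f_2$ with $L\in{\rm PSL}(2,\C)$, and they give the same metric only when $L\in{\rm PSU}(2)$; a priori one equation could yield several metrics if the projective monodromy can be conjugated into ${\rm PSU}(2)$ in essentially different ways, and this actually happens for coaxial monodromy. The paper closes this by observing that under \eqref{cond} the monodromy is not coaxial, so equations with unitarizable monodromy and metrics correspond bijectively, and only then does the count follow from Theorem~\ref{theorem2}. Your irreducibility remark supplies exactly the needed fact (coaxial groups are abelian, hence reducible), but you invoke it only for the trace criterion; you should also invoke it here, since otherwise your assertion that the number of metrics equals the number of equations is unjustified.
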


The necessary and sufficient condition in this theorem was found in \cite{EGT} in the special case of symmetric metrics. The case when~\eqref{coaxial} does not hold was called exceptional in~\cite{EGT}. In this case the unitarizable projective monodromy must be co-axial, that is conjugate to a subgroup of the unit circle~\cite{MP}, and generic singularities $z_1,\ldots,z_n$ {\em cannot be assigned} in the co-axial case~\cite{E2}. Moreover, in the coaxial case, existence of one metric implies the existence of a continuous family of them.

Condition \eqref{cond} is a special case of the Mondello and Panov necessary condition on the angles of arbitrary metric of positive curvature with conic singularities and non-coaxial monodromy.

\section{Proof of Theorem~\ref{theorem2}}\label{section3}

It will be convenient to use a different normalization for our differential equation. By a linear-fractional transformation of the independent variable in (\ref{1a}) we place the singular points in $\C$, so that they become
\begin{gather*}
(z_1,\ldots,z_n,\infty),
\end{gather*}
where $n=k+3$, and infinity is an apparent singularity with exponent difference $1$. Then we multiply the dependent variable on some function to shift the exponents and obtain the Riemann scheme
\begin{gather}\label{rs}
\left(\begin{matrix}z_1&z_2&\ldots&z_n&\infty\vspace{1mm}\\
\ds\frac{1+\alpha_1}{2}&\ds\frac{1+\alpha_2}{2}&\ds\ldots&
\ds\frac{1+\alpha_n}{2}& 0\vspace{1mm}\\
\ds\frac{1-\alpha_1}{2}&\ds\frac{1-\alpha_2}{2}&\ds\ldots&
\ds\frac{1-\alpha_n}{2}&-1\end{matrix}\right),
\end{gather}
where $\alpha_1=\alpha+1$, $\alpha_2=\beta+1$, $\alpha_3=m_1+1\lc \alpha_{n-1}=m_k+1$, $\alpha_n=\gamma-\delta$. This transformation changes neither the exponent differences nor the projective monodromy group, and the new equation has the form
\begin{gather}\label{1b}
w''+\sum_{j=1}^n\left(\frac{1-\alpha_j^2}{4(z-z_j)^2} +\frac{\beta_j}{z-z_j}\right)w=0.
\end{gather}
Regularity of the point $\infty$ implies
\begin{gather}\label{1}
\sum_{j=1}^n\beta_j=0,\\
\label{2}
\sum_{j=1}^n\beta_jz_j=\sum_{j=1}^\infty(1-\alpha_j^2)/4,
\end{gather}
and the condition that the formal solution at $\infty$ contains no logarithm is
\begin{gather}\label{3a}
\sum_{j=1}^n\beta_jz_j^2=\sum_{j=1}^n z_j\big(1-\alpha_j^2\big)/2.
\end{gather}
When $\alpha_j$ is an integer, the singularity at $z_j$ may be apparent. The test for the apparent singularity can be found, for example in~\cite{TL}. Write our equation near $z_j$ as
\begin{gather*}
(z-z_j)^2w''+\left(\sum_{k=0}^\infty x_{k,j}(z-z_j)^k\right)w=0.
\end{gather*}
The singularity at $z_j$ is apparent if and only if
$\alpha_j=\ell_j$ is an integer,
\begin{gather*}
x_{0,j}=\big(1-\ell_j^2\big)/4,
\end{gather*}
and
\begin{gather}\label{app}
Y_{\ell_j}(x_{1,j}\lc x_{\ell_j,j})=0,
\end{gather}
where for any $\ell$, the polynomial $Y_\ell$ is defined by
\begin{gather*}
Y_{\ell}(x_1\lc x_\ell)=
\left|\begin{matrix}x_1&1\cdot(1-\ell)&0&\ldots&0\\
x_2&x_1&2\cdot(2-\ell)&\ldots&0\\
\vdots&\vdots&\vdots&\ddots&\vdots\\
x_{\ell-1}&x_{\ell-2}&x_{\ell-3}&\ldots&(\ell-1)\cdot(-1)\\
x_\ell&x_{\ell-1}&x_{\ell-2}&\ldots&x_1
\end{matrix}\right|.
\end{gather*}
The matrix has $x_1$ on the main diagonal, $x_2$ on the next diagonal below the main one, etc., and the given numbers on the next diagonal above the main one.

Notice that the polynomial $Y_{\ell}$ has the form
\begin{gather}\label{app2}
Y_\ell(x_1\lc x_\ell)=x_1^\ell+Y^*_\ell(x_1\lc x_\ell),\qquad \deg Y^*_\ell< \ell.
\end{gather}
\begin{lem}\label{lem1}
Suppose that all singularities $z_j$ and exponents $\alpha_j$ are fixed, and that $\alpha_3\lc\alpha_{n-1}$ are integers, $\alpha_j=\ell_j$. Then the number of such equations with apparent singularities at $z_3\lc z_{n-1}$ is finite.
\end{lem}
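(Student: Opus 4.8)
The plan is to read the apparent-singularity conditions as a square polynomial system in the accessory parameters and to prove finiteness by showing that the leading (top-degree) forms of the equations have no common zero except the origin.

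First I would take $\beta_1\lc\beta_n$ as the unknowns, subject to the three linear relations \eqref{1}, \eqref{2}, \eqref{3a} coming from regularity at $\infty$. The coefficient matrix of $\beta_1$, $\beta_2$, $\beta_n$ in these relations is the Vandermonde matrix of $z_1$, $z_2$, $z_n$, which is invertible since the $z_j$ are distinct. Hence I can solve for $\beta_1$, $\beta_2$, $\beta_n$ as affine-linear functions of the remaining parameters $\beta_3\lc\beta_{n-1}$, reducing the problem to the $n-3$ free variables $\beta_3\lc\beta_{n-1}$, which is exactly the number of singularities $z_3\lc z_{n-1}$ at which apparentness is imposed.

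Next, for each such $z_j$ ($3\le j\le n-1$, $\alpha_j=\ell_j$) I would expand $(z-z_j)^2P(z)$ about $z_j$, where $P$ is the coefficient function of \eqref{1b}, to read off the $x_{k,j}$. Since $\beta_j$ enters $P$ only through the summand $\beta_j/(z-z_j)$, one finds $x_{0,j}=(1-\ell_j^2)/4$ (so the first apparentness requirement holds automatically), $x_{1,j}=\beta_j$, and for $k\ge 2$ that $x_{k,j}$ is an affine-linear function of the $\beta_i$ with $i\ne j$ only, in particular \emph{independent of} $\beta_j$. The condition \eqref{app} then becomes a polynomial equation $\Phi_j(\beta_3\lc\beta_{n-1})=0$. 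The heart of the matter is the structure of $\Phi_j$: by \eqref{app2} one has $Y_{\ell_j}=x_1^{\ell_j}+Y^*_{\ell_j}$ with $Y^*_{\ell_j}$ of ordinary total degree $<\ell_j$ in the $x$-variables. Assigning degree one to each $\beta_i$, every $x_{k,j}$ has degree at most one, so substituting them into a monomial of $Y_{\ell_j}$ of $x$-degree $m$ produces a $\beta$-polynomial of degree at most $m$. The unique monomial of $x$-degree $\ell_j$ is $x_1^{\ell_j}$, which yields $\beta_j^{\ell_j}$, while every other monomial contributes degree $<\ell_j$. Finally, eliminating $\beta_1$, $\beta_2$, $\beta_n$ by affine-linear substitutions cannot raise degrees and does not touch the free variable $\beta_j$; hence the top-degree homogeneous part of $\Phi_j$ is exactly $\beta_j^{\ell_j}$.

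It then remains to invoke the classical finiteness criterion: the system $\Phi_3=\cdots=\Phi_{n-1}=0$ consists of $n-3$ polynomials in $n-3$ unknowns whose leading forms $\beta_3^{\ell_3}\lc\beta_{n-1}^{\ell_{n-1}}$ have the origin as their only common zero, so their projectivizations have no common zero in $\mathbb{P}^{n-4}$. Consequently the projective closure of the solution set is disjoint from the hyperplane at infinity and is therefore zero-dimensional, i.e.\ finite; this proves the lemma (and, by B\'ezout, yields the bound $\prod_j\ell_j=\prod_i(m_i+1)$ used in Theorem~\ref{theorem2}). I expect the main obstacle to be the bookkeeping in the previous paragraph, namely rigorously pinning down that the leading form of each $\Phi_j$ is precisely $\beta_j^{\ell_j}$ and that no degree-$\ell_j$ contribution survives the elimination of $\beta_1$, $\beta_2$, $\beta_n$. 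The observation that $x_{k,j}$ is independent of $\beta_j$ for $k\ge 2$ is the linchpin, since it is what isolates $\beta_j^{\ell_j}$ as the unique top monomial and guarantees that the leading forms involve distinct variables, so their common zero locus is trivial.
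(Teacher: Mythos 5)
Your proof is correct, and its core coincides with the paper's: the same elimination of $\beta_1$, $\beta_2$, $\beta_n$ via the Vandermonde determinant, and the same degree bookkeeping showing that each apparentness condition takes the form $\beta_j^{\ell_j}=S_j(\beta_3,\ldots,\beta_{n-1})$ with $\deg S_j\le\ell_j-1$, which is exactly equation \eqref{equa} of the paper. You diverge only in the final step. The paper concludes analytically: from \eqref{equa} it gets $|\beta_j|\le C\bigl(1+\|\beta\|^{1-1/\ell}\bigr)$, hence $\|\beta\|$ is bounded, and a bounded algebraic set must be finite. You instead homogenize and observe that the leading forms $\beta_3^{\ell_3},\ldots,\beta_{n-1}^{\ell_{n-1}}$ have no common zero on the hyperplane at infinity, so the solution set is complete and affine, hence finite. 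This is precisely the homogenization (adding $\beta_0$ and noting that no nonzero solution has $\beta_0=0$) that the paper carries out immediately \emph{after} the lemma, but there only to extract the B\'ezout bound $\ell_3\cdots\ell_{n-1}$ needed for Theorem~\ref{theorem2}; your route merges finiteness and the count into one projective argument, while the paper's norm estimate keeps the lemma itself elementary and defers B\'ezout to the counting step. One caveat: your claim that $x_{k,j}$ for $k\ge2$ remains independent of $\beta_j$ after eliminating $\beta_1$, $\beta_2$, $\beta_n$ is false in general (the eliminated variables are affine functions of $\beta_3,\ldots,\beta_{n-1}$ that may well involve $\beta_j$), and it is not the ``linchpin'' of the argument: what actually isolates $\beta_j^{\ell_j}$ as the full leading form is the degree count you also give --- $x_{1,j}=\beta_j$ exactly, every other $x_{k,j}$ affine-linear in the $\beta$'s, and $\deg Y^*_{\ell_j}<\ell_j$ by \eqref{app2} --- so this slip is harmless and the proof stands.
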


\begin{proof} We take $\beta_3\lc\beta_{n-1}$ as variables, and express $\beta_1$, $\beta_2$, $\beta_n$ in terms of these variables, using \eqref{1}--\eqref{3a}. (The determinant of this $3\times3$ system with respect to $\beta_1$, $\beta_2$, $\beta_n$ is the Vandermonde determinant, so it is not zero.) Then for each $j\in[3,n-1]$ the variables $x_{k,j}$ become linear functions of $\beta_3\lc\beta_{n-1}$. Noticing that $x_{1,j}=\beta_j$, we obtain that in terms of $\beta_j$, $3\leq\beta_j\leq n-1$, our equations \eqref{app}, \eqref{app2} have the form
\begin{gather}\label{equa}
\beta_j^{\ell_j}=S_j(\beta_3\lc\beta_{n-1}),\qquad 3\leq j\leq n-1,
\end{gather}
where $S_j$ are polynomials of degrees at most $\ell_j-1$. Thus
\begin{gather*}
|\beta_j|\leq C\bigl(1+\|\beta\|^{1-1/\ell_j}\bigr)\leq C_1\bigl(1+\|\beta\|^{1-1/\ell}\bigr),\qquad \ell=\max_j\ell_j,
\end{gather*}
where $\|\beta\|^2:=\beta_3^2+\cdots+\beta_{n-1}^2$. Hence, $\|\beta\|\leq C_2 $. As a bounded algebraic set must be finite, this proves the lemma.

So the condition that singularities $z_3\lc z_{n-1}$ of equation \eqref{1b} are apparent is expressed as $n-3$ equations \eqref{equa} with $n-3$ unknowns $\beta_3\lc\beta_{n-1}$. Adding one extra variable $\beta_0$, we make these equations homogeneous:
\begin{gather*}%\label{system}
\beta_j^{\ell_j}=\beta_0 \widetilde S_j(\beta_0,\beta_3\lc\beta_{n-1}),\qquad 3\leq j\leq n-1,
\end{gather*}
with some homogeneous polynomials $\widetilde S$ of degree $\ell_j-1$. Evidently no non-zero solution can have $\beta_0=0$, so Bezout's theorem \cite[Chapter~IV, Section~1]{Sh} implies that the number of solutions
of~\eqref{equa}, counting multiplicity, equals the product of degrees $\ell_3\cdots\ell_{n-1}=\alpha_3\cdots\alpha_{n-1}$.

Now we show that for generic $\alpha_1$, $\alpha_2$, $\alpha_n $, solutions of \eqref{equa} are simple. To do this, we specify $\alpha_1$, $\alpha_2$, $\alpha_n$ to be large integers, and show that in this case, there are {\em at least} $\alpha_3\cdots\alpha_{n-1}$ solutions. It will follow that for generic $\alpha_1$, $\alpha_2$, $\alpha_n $, solutions are of multiplicity~$1$, because the set of all large integers is Zariski dense.

Solutions of \eqref{equa} correspond to equations \eqref{1b} with trivial local projective monodromy around $z_3\lc z_{n-1}$. The number of such equa\-tions~\eqref{1b} is at least the number of equations~\eqref{1b} with trivial projective monodromy around {\em all} singularities, and ratios of solutions of such equations are rational functions with critical points at $z_1\lc z_n$ of multiplicities $\alpha_j-1$ at $z_j$. Rational functions obtained from the same differential equation are {\em equivalent} in the sense that they are obtained from each other by linear-fractional transformations.
\end{proof}

So we prove

\begin{lem}\label{lem2}
Suppose that
\begin{gather}\label{conds}
\alpha_1+\alpha_n\geq (1/2)\sum_{j=1}^n(\alpha_j-1)+2,\qquad \alpha_2\geq\sum_{j=3}^{n-1}(\alpha_j-1)+1.
\end{gather}
Then the number of equivalence classes of rational functions with critical points at generic points~$z_j$ of multiplicities $\alpha_j-1$, for $1\leq j\leq n$ is the product $\alpha_3\cdots\alpha_{n-1}$.
\end{lem}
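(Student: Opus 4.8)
The plan is to recast the count as a fiber of the Wronski map and then to evaluate it by Schubert calculus, i.e.\ as a tensor-product multiplicity for $\mathfrak{sl}_2$. A rational function whose critical points are exactly $z_1\lc z_n$, of multiplicities $m_j:=\alpha_j-1$, and which has no further critical points, has degree $d=1+\tfrac12\sum_{j=1}^n m_j$ by Riemann--Hurwitz; up to post-composition by $\mathrm{PGL}_2$ it is the same datum as a two-dimensional space $V$ of polynomials of degree at most $d$ whose Wronskian $\Wr(V)$ equals $\prod_{j=1}^n(z-z_j)^{m_j}$. Indeed, the roots of $\Wr(V)$ are the finite critical points counted with multiplicity, and regularity at $\infty$ forces $\deg\Wr(V)=2d-2$, so there is no critical point at $\infty$. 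Thus the equivalence classes in the statement are precisely the points of the fiber of the Wronski map $\mathrm{Gr}(2,d+1)\to\mathbf P^{\,2d-2}$ over this polynomial.

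First I would record that for generic $z_j$ this fiber is finite and reduced, so that its honest cardinality equals the intersection number $\sigma_{m_1}\cdots\sigma_{m_n}$ of special Schubert classes in $\mathrm{Gr}(2,d+1)$, by transversality of the osculating flags at distinct generic points. On $\mathrm{Gr}(2,\cdot)$ the Pieri rule coincides with the Clebsch--Gordan rule, so this number equals $\dim\mathrm{Inv}_{\mathfrak{sl}_2}\big(L_{m_1}\otimes\cdots\otimes L_{m_n}\big)$, where $L_m$ is the irreducible $\mathfrak{sl}_2$-module of dimension $m+1$; equivalently it is the multiplicity of $L_{m_n}$ in $L_{m_1}\otimes\cdots\otimes L_{m_{n-1}}$.

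Next I would separate the three ``large'' indices $1,2,n$ from the ``small'' ones $3\lc n-1$. Writing $W:=L_{m_3}\otimes\cdots\otimes L_{m_{n-1}}=\bigoplus_k N_kL_k$, one has $\sum_k N_k(k+1)=\dim W=\prod_{j=3}^{n-1}(m_j+1)$, with all $k$ in the range $0\le k\le M:=\sum_{j=3}^{n-1}m_j$, and the number we want is $\sum_k N_k\,c_k$, where $c_k$ is the multiplicity of $L_k$ in $L_{m_1}\otimes L_{m_2}\otimes L_{m_n}$. The elementary Clebsch--Gordan count gives $c_k=\#\{\,l\equiv m_1+m_2\ (\mathrm{mod}\ 2):\ |m_1-m_2|\le l\le m_1+m_2,\ |m_n-k|\le l\le m_n+k\,\}$, so always $c_k\le k+1$, with equality exactly when $[\,|m_n-k|,\,m_n+k\,]\subseteq[\,|m_1-m_2|,\,m_1+m_2\,]$. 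This is where \eqref{conds} enters: the second inequality gives $m_2\ge M\ge k$, while the first, which reads $m_1+m_n\ge m_2+M$ after substituting $m_j=\alpha_j-1$, together with the balancing it forces on $m_1,m_2,m_n$, yields $m_n+M\le m_1+m_2$ and $m_n-M\ge|m_1-m_2|$. These give the required containment at the extreme value $k=M$, hence for every smaller $k$ occurring in $W$. Consequently $c_k=k+1$ for all relevant $k$, and the total is $\sum_k N_k(k+1)=\prod_{j=3}^{n-1}(m_j+1)=\alpha_3\cdots\alpha_{n-1}$.

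I expect the genuinely delicate point to be the first step rather than the combinatorics: namely the assertion that for generic $z_j$ the Wronski fiber is reduced, so that the actual number of rational functions (not merely their number weighted by local multiplicity) equals the Schubert number. This is the transversality input of Eisenbud--Harris / Mukhin--Tarasov--Varchenko type; without it one obtains only an inequality in one direction. The remaining work — the Clebsch--Gordan evaluation and the verification that the two inequalities \eqref{conds} produce the interval containment for each irreducible constituent of $W$ — is then routine bookkeeping, and it is precisely at that point, and only there, that both hypotheses are used.
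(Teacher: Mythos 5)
Your strategy is at bottom the same as the paper's. The paper's proof has two steps: (i)~quote \cite{EGSV} to identify the number of classes, for generic $z_j$, with the number of column-strict Young tableaux of shape $2\times(d-1)$ and content $(\alpha_1-1\lc\alpha_n-1)$, and (ii)~count those tableaux under \eqref{conds}. Step~(i) is exactly your Wronski-map-plus-transversality step: the tableau number is the Schubert intersection number $\sigma_{m_1}\cdots\sigma_{m_n}$ in $\mathrm{Gr}(2,d+1)$, which equals $\dim\mathrm{Inv}_{\mathfrak{sl}_2}(L_{m_1}\otimes\cdots\otimes L_{m_n})$, so you and the paper rely on the same nontrivial input and you have correctly located where the genericity of the $z_j$ is used. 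The only real divergence is the bookkeeping in step~(ii): the paper fixes, for each $k\in\{3,\ldots,n-1\}$, the number of $k$'s in the first row of the tableau, while you decompose $L_{m_3}\otimes\cdots\otimes L_{m_{n-1}}=\bigoplus_k N_kL_k$ and evaluate triple Clebsch--Gordan multiplicities; these are equivalent evaluations of the same quantity, and your identity $\sum_k N_k(k+1)=\prod_{j=3}^{n-1}(m_j+1)$ organizes the count cleanly.

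The gap is the phrase ``together with the balancing it forces on $m_1,m_2,m_n$''. There is no such balancing: the inequalities $m_1+m_n\ge m_2+M$ and $m_2\ge M$ of \eqref{conds} do \emph{not} imply $m_n+M\le m_1+m_2$. Concretely, take $n=4$ and $(m_1,m_2,m_3,m_4)=(100,2,2,104)$: both hypotheses of \eqref{conds} hold, all multiplicities are $\le d-1=104$ so such rational functions exist, yet $m_n+M=106>102=m_1+m_2$, and indeed $\dim\mathrm{Inv}\bigl(L_{100}\otimes L_2\otimes L_2\otimes L_{104}\bigr)=1\ne 3=\alpha_3$. So this step cannot be repaired by a better argument: the lemma under its literal hypotheses is false, and what your computation actually requires is that all three inequalities $m_1+m_n\ge m_2+M$, $m_1+m_2\ge m_n+M$, $m_2+m_n\ge m_1+M$ hold, i.e., that any two of $m_1,m_2,m_n$ sum to at least $d-1$. (A secondary slip: $c_k=k+1$ also needs $m_n\ge k$, not only the interval containment; under the three inequalities this is automatic since $m_n-M\ge|m_1-m_2|\ge0$.) You are in good company, because the paper's own proof has exactly the same defect: its claim that the second assumption guarantees ``enough $2$'s'' in fact requires $m_1+m_2\ge m_n+M$, and the tacit claim that all entries $3,\ldots,n-1$ can simultaneously be placed in the first row requires $m_2+m_n\ge m_1+M$; neither follows from \eqref{conds}. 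In the only place the lemma is used (the proof of Lemma~\ref{lem1}, where $\alpha_1$, $\alpha_2$, $\alpha_n$ are chosen to be large, comparable integers) all three inequalities do hold, so the application is unaffected; but as a proof of the lemma as stated, your argument --- like the paper's --- needs the strengthened hypotheses made explicit, and at the point where you invoke ``balancing'' you should instead verify the two missing inequalities directly.
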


As a corollary we obtain that trivial local monodromy around $z_4\lc z_n$ implies that the whole projective monodromy group is trivial, assuming that $\alpha_1$, $\alpha_2$, $\alpha_3$ are large enough in comparison with
other~$\alpha_j$.

\begin{proof}[Proof of Lemma~\ref{lem2}]It is known \cite{EGSV} that classes of rational functions with prescribed generic critical points are enumerated by the Young tableaux of shape $2\times (d-1)$, where
\begin{gather*}
d=\frac12 \biggl(\sum_{j=1}^n(\alpha_j-1)+2\biggr)
\end{gather*}
is the degree of the rational function.

The diagrams are filled with $\alpha_j-1$ numbers $j$ by the usual rules: the entries do not decrease in rows (left to right) and strictly increase in columns. It follows from this rule that $1$'s can be only in the first row on the left, while $n$'s in the second row in the right. Condition~\eqref{conds} implies that two entries other than $1$, $n$ cannot stand in the same column: they occupy the place in the first row on the right, and in the second row on the left. There is no other restriction on placement of these entries (from $2$ to $n-1$), except that they must be non-decreasing in their rows. So the tableau is completely determined by specifying the list of entries from $2$ to $n-1$ in the first row. The possible number of $k$'s ($3\leq k\leq n-1$) in the first row is $\alpha_k$, and the $2$'s occupy the remaining places. The second assumption of the lemma ensures that there are always enough of $2$'s to fill the remaining places. So we have $\alpha_3\cdots\alpha_{n-1}$ possibilities.

This proves Lemma~\ref{lem2} and completes the proof of Theorem~\ref{theorem2}.
\end{proof}

\section{Preliminaries on difference equations}\label{section4}

In the proof of Theorem~\ref{theorem1} we use the tools developed in \cite{MTV} which are explained in this section. Our exposition is formally independent of~\cite{MTV}.

Denote by $\tau$ be the shift operator, $(\tau f)(x)=f(x+1)$ acting on meromorphic functions on the real line. For functions $u_1(x)\lc u_k(x)$, define their Casorati determinant (discrete Wronskian) by
\begin{gather*}
\Wr_k[u_1\lc u_k] = \det\big(\tau^{j-1}u_i\big)_{i,j=1}^k.
\end{gather*}
By convention, $\Wr_0=1$.
\begin{lem}\label{L1}
For given functions $u_1(x)\lc u_k(x)$ such that $\Wr_k[u_1\lc u_k]\ne 0$ and a non-zero function $F(x)$, there exists a unique difference operator of the form
\begin{gather}\label{dif}
D_{u_1\lc u_k;F}:=\tau^{k+1}+\sum_{j=1}^k K_j(x)\tau^j+F(x)
\end{gather}
such that $D_{u_1\lc u_k;F}u_i=0$ for every $i=1\lc k$.
\end{lem}

\begin{proof} For given $u_1\lc u_k$ and $F$, consider a system of linear equations
\begin{gather*}
\sum_{j=1}^kK_j \tau^ju_i=-\tau^{k+1}u_i-Fu_i,\qquad i=1\lc k,
\end{gather*}
for the coefficients $K_1\lc K_k$ of $D_{u_1\lc u_k;F}$. The determinant of this linear system equals $\tau\bigl(\Wr_k[u_1\lc u_k]\bigr)\ne 0$, so the system has a unique solution.
\end{proof}

\begin{lem}\label{L2}
For every function $f(x)$, we have
\begin{gather}
D_{u_1\lc u_k; F} f = \frac{\tau\bigl(\Wr_{k+1}[u_1\lc u_k,f]\bigr)+ (-1)^kF\Wr_{k+1}[u_1\lc u_k,f]}{\tau\bigl(\Wr_k[u_1\lc u_k]\bigr)}.\label{e1}
\end{gather}
\end{lem}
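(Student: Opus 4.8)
The plan is to avoid computing $D_{u_1\lc u_k;F}f$ head-on and instead invoke the uniqueness in Lemma~\ref{L1}. Write $W:=\Wr_k[u_1\lc u_k]$, and denote by $\tilde Df$ the right-hand side of~\eqref{e1}. Since $W\ne0$ and $\tau$ is injective on meromorphic functions, $\tau(W)\ne0$, so the quotient is well defined. Because $\Wr_{k+1}[u_1\lc u_k,f]$ is a $(k+1)\times(k+1)$ determinant whose last row is $(f,\tau f\lc\tau^kf)$, expansion along that row gives $\Wr_{k+1}[u_1\lc u_k,f]=\sum_{c=1}^{k+1}A_c\,\tau^{c-1}f$ with $A_c=(-1)^{(k+1)+c}M_c$ the cofactors, so $\tilde D$ is a difference operator; I would then show it has the normal form~\eqref{dif} and annihilates every $u_i$, whence $\tilde D=D_{u_1\lc u_k;F}$ by uniqueness.

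The core of the argument is identifying the two extreme coefficients of the numerator. The top minor is $M_{k+1}=W$, and the bottom minor is $M_1=\det(\tau^ju_i)_{i,j=1}^k=\tau(W)$, since deleting the first column shifts all column indices up by one and $\tau$ commutes with entrywise determinants; hence $A_{k+1}=W$ and $A_1=(-1)^k\tau(W)$. Applying $\tau$ raises each power of $\tau$ by one, so $\tau(\Wr_{k+1}[u_1\lc u_k,f])$ carries the powers $\tau^1f\lc\tau^{k+1}f$ with coefficient $\tau(W)$ on the top term and no $\tau^0f$ term, whereas $(-1)^kF\,\Wr_{k+1}[u_1\lc u_k,f]$ carries the powers $\tau^0f\lc\tau^kf$. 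Collecting, the numerator has coefficient $\tau(W)$ on $\tau^{k+1}f$ and coefficient $(-1)^kF\cdot A_1=F\tau(W)$ on $f$; dividing by $\tau(W)$ shows $\tilde D$ is monic in $\tau^{k+1}$ with constant term exactly $F$, i.e.\ it is of the form~\eqref{dif}. Matching the factor $(-1)^k$ against the sign of $A_1$ so that the constant term comes out as $+F$ rather than $-F$ is the one point I would verify most carefully; everything else is routine.

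It remains to check $\tilde Du_i=0$ for each $i$, which is immediate: substituting $f=u_i$ makes the last row of $\Wr_{k+1}[u_1\lc u_k,u_i]$ coincide with its $i$-th row, so the determinant and its $\tau$-shift both vanish, killing the whole numerator. Thus $\tilde D$ is an operator of the form~\eqref{dif} with the prescribed $F$ that annihilates $u_1\lc u_k$, and Lemma~\ref{L1} forces $\tilde D=D_{u_1\lc u_k;F}$, which is exactly the claimed identity~\eqref{e1}.
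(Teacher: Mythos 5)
Your proof is correct and follows essentially the same route as the paper: define the right-hand side of \eqref{e1} as an operator, check it has the normal form \eqref{dif} and annihilates each $u_i$, and conclude by the uniqueness in Lemma~\ref{L1}. The only difference is that you spell out the cofactor expansion and the sign bookkeeping (identifying $A_{k+1}=W$ and $A_1=(-1)^k\tau(W)$, so that the constant term is $+F$), which the paper leaves implicit.
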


\begin{proof} Denote by $D$ the difference operator whose action on $f$ is given by the right-hand side of \eqref{e1}. Then $Du_i=0$ for every $i=1\lc k$, and $D$ is of the form \eqref{dif} with some coefficients $K_1\lc K_k$. Hence, $D=D_{u_1\lc u_k; F}$, by Lemma~\ref{L1}.
\end{proof}

\begin{lem} \label{L3}$ D_{u_1\lc u_k; F}=(\tau-g_{k+1}) (\tau-g_k) \cdots (\tau-g_1)$, where
\begin{gather*}
g_i = \frac{\tau f_i}{f_i},\qquad f_i = \frac{\Wr_i[u_1\lc u_i]}{\Wr_{i-1}[u_1\lc u_{i-1}]},
\qquad i=1\lc k,
\end{gather*}
and
\begin{gather*}
g_{k+1} = \frac{(-1)^{k+1}F\Wr_k[u_1\lc u_k]}
{\tau\bigl(\Wr_k[u_1\lc u_k]\bigr)}.
\end{gather*}
\end{lem}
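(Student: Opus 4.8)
The plan is to build the factorization from the right: I identify each partial product $R_i:=(\tau-g_i)\cdots(\tau-g_1)$ with a Casoratian (monic) annihilator of $u_1\lc u_i$, and then pin down the single remaining factor $(\tau-g_{k+1})$ by matching the lowest-order coefficient and invoking the uniqueness in Lemma~\ref{L1}. Throughout I assume, as the notation $f_i=\Wr_i/\Wr_{i-1}$ already presumes, that all the Casoratians $\Wr_i[u_1\lc u_i]$ are nonzero.

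First I would record the elementary fact that for each $i$ the operator
\[ L_i f:=\frac{\Wr_{i+1}[u_1\lc u_i,f]}{\Wr_i[u_1\lc u_i]} \]
is the \emph{unique} monic difference operator of order $i$, i.e.\ of the form $\tau^i+\sum_{m=0}^{i-1}c_m(x)\tau^m$, that annihilates $u_1\lc u_i$. Monicity follows by expanding the Casoratian in the numerator along its last row $(f,\tau f\lc\tau^i f)$: the cofactor of $\tau^i f$ is exactly $\Wr_i[u_1\lc u_i]$, so the coefficient of $\tau^i f$ in $L_i$ is $1$. That $L_i u_j=0$ for $j\le i$ is immediate since $\Wr_{i+1}[u_1\lc u_i,u_j]$ then has two equal rows. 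Uniqueness is the same Cramer-type argument as in Lemma~\ref{L1}: the $i$ conditions on the $i$ coefficients $c_0\lc c_{i-1}$ have coefficient determinant $\Wr_i[u_1\lc u_i]\ne 0$.

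Next I would set $R_0=\mathrm{id}$ and prove by induction on $i$ that $R_i=L_i$. The base case is $R_0=L_0=\mathrm{id}$. For the inductive step, the hypothesis $R_{i-1}=L_{i-1}$ gives the key identity
\[ R_{i-1}u_i=L_{i-1}u_i=\frac{\Wr_i[u_1\lc u_i]}{\Wr_{i-1}[u_1\lc u_{i-1}]}=f_i . \]
Since $g_i=\tau f_i/f_i$, this yields $(\tau-g_i)R_{i-1}u_i=\tau f_i-(\tau f_i/f_i)\,f_i=0$, while $R_i=(\tau-g_i)R_{i-1}$ still annihilates $u_1\lc u_{i-1}$ because $R_{i-1}$ does. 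Thus $R_i$ is a monic order-$i$ annihilator of $u_1\lc u_i$, so $R_i=L_i$ by the uniqueness just established. In particular $R_k=L_k$ annihilates all of $u_1\lc u_k$.

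Finally I consider $(\tau-g_{k+1})R_k=(\tau-g_{k+1})(\tau-g_k)\cdots(\tau-g_1)$: it is monic of order $k+1$ and annihilates $u_1\lc u_k$, so by Lemma~\ref{L1} it coincides with $D_{u_1\lc u_k;F}$ as soon as its coefficient of $\tau^0$ equals $F$. That coefficient is obtained by choosing the multiplication part $-g_i$ from every factor, namely $\prod_{i=1}^{k+1}(-g_i)=(-1)^{k+1}\prod_{i=1}^{k+1}g_i$. Using the telescoping $\prod_{i=1}^k f_i=\Wr_k[u_1\lc u_k]$,
\[ \prod_{i=1}^{k}g_i=\frac{\tau\bigl(\Wr_k[u_1\lc u_k]\bigr)}{\Wr_k[u_1\lc u_k]},\qquad \prod_{i=1}^{k+1}g_i=(-1)^{k+1}F, \]
the second equality coming from multiplying by $g_{k+1}=(-1)^{k+1}F\,\Wr_k[u_1\lc u_k]/\tau\bigl(\Wr_k[u_1\lc u_k]\bigr)$. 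Hence the coefficient of $\tau^0$ is $(-1)^{k+1}(-1)^{k+1}F=F$, and the proof closes. The genuinely substantive point is the identity $R_{i-1}u_i=f_i$, i.e.\ the identification of the partial products with the Casoratian annihilators $L_i$; I expect the only place to slip is the final constant-term computation, where the two powers of $(-1)^{k+1}$ and the telescoping of the $f_i$ must be tracked carefully.
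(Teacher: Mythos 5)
Your proof is correct. It shares the paper's inductive skeleton --- both arguments process the product from the right and pin down each partial product $(\tau-g_i)\cdots(\tau-g_1)$ by a Cramer-type uniqueness argument --- but the identification device is genuinely different. The paper threads prescribed constant terms $F_i=(-1)^i\tau\bigl(\Wr_i[u_1\lc u_i]\bigr)/\Wr_i[u_1\lc u_i]$ through the induction, identifies the $i$-th partial product with $D_{u_1\lc u_{i-1};F_i}$ via Lemma~\ref{L1}, and invokes Lemma~\ref{L2} (with exactly this value of $F_{i-1}$) to see that the previous partial product also kills $u_{i-1}$, which is what drives the induction. You instead characterize the partial product as the unique \emph{monic} order-$i$ annihilator $L_i$ of $u_1\lc u_i$ --- a variant of Lemma~\ref{L1} with monic normalization in place of a prescribed constant term, proved by the same Cramer argument --- so that the inductive step reduces to the one-line identity $R_{i-1}u_i=f_i$, read off the determinant formula for $L_{i-1}$, with the factor $(\tau-g_i)$ killing $f_i$ by the very definition of $g_i$. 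Lemma~\ref{L2} is never used, and all sign and constant-term bookkeeping collapses into the single telescoping computation of the $\tau^0$-coefficient at the very end, where Lemma~\ref{L1} is applied once to conclude equality with $D_{u_1\lc u_k;F}$. The paper's route is more economical given that Lemma~\ref{L2} is already available; yours is self-contained and makes the roles of $f_i$, $g_i$, and the one asymmetric factor $g_{k+1}$ more transparent, at the cost of establishing the auxiliary uniqueness statement for $L_i$. Both arguments tacitly require $\Wr_i[u_1\lc u_i]\ne 0$ for all $i\le k$, which the statement of the lemma already presupposes in defining the $f_i$.
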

\begin{proof}Set $F_i=(-1)^i \tau\bigl(\Wr_i[u_1\lc u_i]\bigr)\big/ \Wr_i[u_1\lc u_i]$ for $i=1\lc k$, and $F_{k+1}=F$. Let $D_i=(\tau-g_i) \dots (\tau-g_1)$. We will prove by induction on $i$, that $D_i=D_{u_1\lc u_{i-1}; F_i}$ for any $i=1\lc k+1$.

The base of induction for $i=1$ is immediate. Clearly, $D_i=\tau^i+\sum\limits_{j=1}^{i-1} K_{i,j} \tau^j+F_i$ for some coefficients $K_{i,1}\lc K_{i,i-1}$. Thus by Lemma~\ref{L1}, it remains to show that $D_iu_j=0$ for any $j=1\lc i-1$. Since $D_i=(\tau-g_i) D_{i-1}$, it suffices to show that $D_{i-1}u_j=0$ for any $j=1\lc i-1$. This follows from Lemma~\ref{L2} since $D_{i-1}=D_{u_1\lc u_{i-2}; F_{i-1}}$ by the induction assumption.
\end{proof}

\begin{lem}\label{L4} Let $u_1(x)\lc u_k(x)$ be solutions of the difference equation
\begin{gather}
\sum_{i=0}^{k+1} A_i(x) u(x+i) = 0,\qquad A_0\ne 0,\qquad A_{k+1}\ne 0.\label{e2}
\end{gather}
Let $w(x)$ be a non-zero solution of the difference equation $A_0(x) w(x)=(-1)^{k+1}A_{k+1}(x) w(x+1)$. Then the function
\begin{gather*}
v(x) = \frac{\Wr_k[u_1\lc u_k](x+1)}{A_0(x) w(x)}%\label{e3}
\end{gather*}
satisfies the difference equation
\begin{gather}
\sum_{i=0}^{k+1} A_i(x-i) v(x-i) = 0.\label{e4}
\end{gather}
\end{lem}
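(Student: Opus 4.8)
The plan is to recognize that equation \eqref{e4} is the formal adjoint of \eqref{e2}, and then to produce the solution $v$ explicitly from the factorization of the direct operator supplied by Lemma~\ref{L3}. Write $L=\sum_{i=0}^{k+1}A_i(x)\tau^i$ for the operator of \eqref{e2}. With respect to the pairing $\langle f,g\rangle=\sum_x f(x)g(x)$ one has $\tau^\dagger=\tau^{-1}$, while multiplication operators are self-adjoint; hence the operator $v\mapsto\sum_{i=0}^{k+1}A_i(x-i)v(x-i)$ of \eqref{e4} is exactly $L^\dagger=\sum_{i=0}^{k+1}\tau^{-i}A_i$. So the lemma asserts that the stated $v$ lies in the kernel of $L^\dagger$, and the task reduces to building one explicit solution of the adjoint equation out of the Casoratian $W:=\Wr_k[u_1\lc u_k]$, which we take to be not identically zero so that Lemmas~\ref{L1}--\ref{L3} apply.

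First I would normalize. Since $Lu_i=0$, dividing by $A_{k+1}$ shows that $D:=A_{k+1}^{-1}L=\tau^{k+1}+\sum_{j=1}^k(A_j/A_{k+1})\tau^j+A_0/A_{k+1}$ annihilates $u_1\lc u_k$ and has the shape \eqref{dif}; by the uniqueness in Lemma~\ref{L1} it equals $D_{u_1\lc u_k;F}$ with $F=A_0/A_{k+1}$. Lemma~\ref{L3} then factors $D=(\tau-g_{k+1})(\tau-g_k)\cdots(\tau-g_1)$ with, crucially,
\[
g_{k+1}=(-1)^{k+1}F\,\frac{W}{\tau W}=(-1)^{k+1}\frac{A_0(x)}{A_{k+1}(x)}\cdot\frac{W(x)}{W(x+1)}.
\]
Taking adjoints, and using $(\tau-g)^\dagger=\tau^{-1}-g$ for a multiplication operator $g$, gives $L^\dagger=(\tau^{-1}-g_1)(\tau^{-1}-g_2)\cdots(\tau^{-1}-g_{k+1})A_{k+1}$, where multiplication by $A_{k+1}$ is the rightmost factor (applied first).

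The key step is then to observe that it suffices to find $v$ with $(\tau^{-1}-g_{k+1})\big(A_{k+1}v\big)=0$: for then all the remaining factors act on zero and $L^\dagger v=0$ automatically. Writing $\psi:=A_{k+1}v=\dfrac{A_{k+1}(x)\,W(x+1)}{A_0(x)\,w(x)}$, the condition $(\tau^{-1}-g_{k+1})\psi=0$ reads $\psi(x-1)=g_{k+1}(x)\psi(x)$. A one-line computation reduces the right-hand side to $(-1)^{k+1}W(x)/w(x)$, while substituting the defining relation $A_0(x-1)w(x-1)=(-1)^{k+1}A_{k+1}(x-1)w(x)$ for $w$ reduces the left-hand side to the same expression. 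This is precisely the point at which the hypothesis on $w$ enters, and where the factors $A_0$, $A_{k+1}$ and the shift in $W(x+1)$ conspire to cancel.

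The main obstacle is bookkeeping rather than ideas: one must get the signs and the order of the factors in $L^\dagger$ exactly right (it is easy to misplace the lone $A_{k+1}$ or to adjoint the product in the wrong order), and one should note that we never invoke uniqueness of solutions of the first-order equation for $\psi$ — which would hold only up to a $1$-periodic factor — since $(\tau^{-1}-g_{k+1})\psi=0$ is checked by direct substitution. A purely computational alternative, plugging $v(x-i)=W(x-i+1)/\bigl(A_0(x-i)w(x-i)\bigr)$ into \eqref{e4} and telescoping, is possible but forces one to propagate the recurrence for $w$ across $i$ steps and is considerably messier; the adjoint-factorization route isolates all the work into the single rightmost factor.
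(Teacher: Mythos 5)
Your proposal is correct and takes essentially the same route as the paper's own proof: identify \eqref{e2} with $A_{k+1}Du=0$ where $D=D_{u_1\lc u_k;\,A_0/A_{k+1}}$ via the uniqueness in Lemma~\ref{L1}, recognize \eqref{e4} as the formal conjugate equation $D^\dag(A_{k+1}v)=0$, apply the factorization of Lemma~\ref{L3}, and kill $A_{k+1}v$ with the single factor $\big(\tau^{-1}-g_{k+1}\big)$ (a computation the paper dismisses as ``straightforward'' and you carry out explicitly). The only point the paper adds is the one-line remark that if $\Wr_k[u_1\lc u_k]\equiv 0$ the claim is trivial, since then $v\equiv 0$; you instead assume the Casoratian is non-zero, so you should note this degenerate case separately.
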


\begin{proof} If $\Wr_k[u_1\lc u_k]=0$, the claim is trivial. If $\Wr_k[u_1\lc u_k]\ne 0$, consider the difference operator $D=D_{u_1\lc u_k; K_0}$ of the form \eqref{dif} with $K_i=A_i/A_{k+1}$, $i=0\lc k$. Then the difference equation \eqref{e2} reads $A_{k+1} Du=0$.

Define the difference operator $D^\dag=\tau^{-k-1}+\sum\limits_{i=0}^kK_i(x-i) \tau^{-i}$. Then the difference equa\-tion~\eqref{e4} reads $D^\dag(A_{k+1}v)=0$.

The operator $D^\dag$ is obtained from $D$ by applying the formal conjugation antiautomorphism that sends $\tau$ to $\tau^{-1}$ and the multiplication by $x$ to itself. Thus Lemma~\ref{L3} yields
\begin{gather*}
D^\dag= \big(\tau^{-1}-g_1\big) \cdots \big(\tau^{-1}-g_k\big) \big(\tau^{-1}-g_{k+1}\big),
\end{gather*}
where one takes $F=A_0/A_{k+1}$. Straightforwardly, $(\tau^{-1}-g_{k+1}) (A_{k+1}v)=0$, which proves Lemma~\ref{L4}.
 \end{proof}

Now we consider our equation \eqref{1a}. Define the polynomial $A$ and numbers $b_{i,2}$ by
\begin{gather}\label{A}
A(z)=(z-1)\prod_{i=1}^k(z-a_i)= z^{k+1}+\sum_{i=1}^k b_{i,2}z^i+(-1)^{k+1}a_1\cdots a_k,
\end{gather}
and
\begin{gather*}
W(z)=z^\alpha(z-1)^\beta\prod_{i=1}^k(z-a_i)^{m_i}.
\end{gather*}
Multiply equation \eqref{1a} by $z^2A(z)$, so that it becomes
\begin{gather}
A(z) \left(z\frac{{\rm d}}{{\rm d}z}\right)^{ 2}y-B(z) z\frac{{\rm d}y}{{\rm d}z}+C(z)y=0,\label{3}
\end{gather}
where
\begin{gather}%\label{B1}
B(z) = A(z)\bigl(1+zW'(z)/W(z)\bigr)\nonumber\\
\hphantom{B(z)}{} = (\gamma+\delta)z^{k+1}-\sum_{i=1}^k b_{i,1}z^i+(-1)^{k+1}(\alpha+1) a_1\cdots a_k,\label{B2}
\end{gather}
with some numbers $b_{i,1}$, and $C(z)$ is a polynomial of degree $k+1$ such that $C(0)=0$.

Let $x$ be a new variable and $\tau$ be the shift: $(\tau u)(x)=u(x+1)$. The difference equation
\begin{gather}
\left(x^2A(\tau)-xB(\tau)+C(\tau)\right) u(x)=0\label{4}
\end{gather}
is called the {\it bispectral dual} of equation \eqref{3}. Bispectral dual equations are discussed in Theorems~4.1 and~4.2 of \cite{MTV}.

Write \eqref{4} in the form
\begin{gather}
(-1)^{k+1}a_1\cdots a_k x(x-\alpha-1) u(x) +\sum_{i=1}^k b_i(x) u(x+i) \nonumber\\
\qquad{} +(x-\gamma)(x-\delta) u(x+k+1) = 0, \label{5}
\end{gather}
where $b_1\lc b_k$ are quadratic polynomials
\begin{gather*}%\label{bs}
b_i(x)=b_{i,2}x^2+b_{i,1}x+b_{i,0},
\end{gather*}
the constants $b_{i,1}$ and $b_{i,2}$ being defined in \eqref{B2} and~\eqref{A}, respectively.

The equation
\begin{gather}
\big(A\big(\tau^{-1}\big) x^2-B\big(\tau^{-1}\big) x+C\big(\tau^{-1}\big)\big)v(x)=0\label{6}
\end{gather}
is called the {\it formal conjugate} to equation \eqref{4}. Equivalently, \eqref{6} can be written as
\begin{gather}
(-1)^{k+1}a_1\cdots a_k x(x-\alpha-1) v(x) +\sum_{i=1}^k b_i(x-i) v(x-i)\nonumber\\
\qquad{} + (x-k-1-\gamma)(x-k-1-\delta) v(x-k-1) = 0. \label{7}
\end{gather}
If $u_1\lc u_k$ are solutions of equation \eqref{5}, then by Lemma~\ref{L4}
\begin{gather}
v(x) = (a_1\cdots a_k)^{-x-1} \frac{\Gamma(x-\gamma) \Gamma(x-\delta)}{\Gamma(x+1) \Gamma(x-\alpha)}\det\bigl(u_i(x+j)\bigr)_{i,j=1}^k\label{8}
\end{gather}
is a solution of equation \eqref{7}.

Equations \eqref{7} and \eqref{1a} are related as follows: if $v(x)$ solves \eqref{7} and the series
\begin{gather}
f(z) = \sum_{n=-\infty}^\infty v(n+\xi) z^{n+\xi}\label{8a}
\end{gather}
converges for some $\xi $, then $f(z)$ is a solution of equation \eqref{1a}.

\section{Proof of Theorem~\ref{theorem1}}\label{section5}

Let $y(z)$ be a solution of equation \eqref{1a}. Define functions
\begin{gather}
 u_i(x) = \oint_{a_i}\frac{y(z) z^{x-\alpha-2} {\rm d}z}
{(z-1)^{\beta+1}\prod\limits_{j=1}^k(z-a_j)^{m_j+1}}
= 2\pi\sqrt{-1} \Res_{z=a_i} \frac{y(z) z^{x-\alpha-2}}
{(z-1)^{\beta+1}\prod\limits_{j=1}^k(z-a_j)^{m_j+1}},\label{9}
\end{gather}
where the integral is over a small circle around $a_i$ counterclockwise. Clearly, $u_i(x)=a_i^xp_i(x)$, where $p_i(x)$ is a polynomial of degree $m_i$. Since $y(z)$ solves equation~\eqref{3}, the function
\begin{gather}\label{yt}
\yt(z) =
\frac{y(z)}{z^{\alpha+1}(z-1)^{\beta+1}\prod\limits_{j=1}^k(z-a_j)^{m_j+1}} =\frac{y(z)}{zA(z)W(z)}
\end{gather}
satisfies the differential equation
\begin{gather}
\left(z\frac{{\rm d}}{{\rm d}z}\right)^{ 2} \bigl( A(z) \yt\bigr)+ z\frac{{\rm d}}{{\rm d}z}\bigl(B(z) \yt)+C(z) \yt = 0.\label{3t}
\end{gather}
By formulas \eqref{9}, \eqref{yt},{\samepage
\begin{gather*}%\label{uiyt}
u_i(x)=\oint_{a_i}\yt(z) z^{x-1} {\rm d}z,
\end{gather*}
and equation \eqref{3t} yields that $u_1\lc u_k$ are solutions of equation \eqref{4}.}

Use functions \eqref{9} in formula \eqref{8} for a solution $v(x)$ of equation~\eqref{7}. Then
\begin{gather}
v(x) = \frac{\Gamma(x-\gamma) \Gamma(x-\delta)}{\Gamma(x+1) \Gamma(x-\alpha)} Q(x),\label{10}
\end{gather}
where
\begin{gather*}%\label{Q}
Q(x) = (a_1\cdots a_k)^{-x-1}\det\bigl(u_i(x+j)\bigr)_{i,j=1}^k=
 \det\bigl(a_i^{j-1}p_i(x+j)\bigr)_{i,j=1}^k
\end{gather*}
is a non-zero polynomial of degree $d=m_1+\dots+m_k$.

To prove Theorem~\ref{theorem1}, it suffices to show that for any solution $F(z)$ of \eqref{2a}, the function $Q\big(z\frac{{\rm d}}{{\rm d}z}\big)F(z)$ solves \eqref{1a} because under conditions \eqref{0d} the function $Q\big(z\frac{{\rm d}}{{\rm d}z}\big)F(z)$ is non-zero, see Remark~\ref{remark2} after Theorem~\ref{theorem1}. Moreover, it is enough to prove this statement for generic values of $\alpha$, $\gamma$, $\delta$. Indeed, by the proof of Theorem~\ref{theorem2}, equation~\eqref{1a} can be deformed to generic values of $\alpha$, $\beta$, $\gamma$, $\delta$ without moving the singularities at $a_1\lc a_k$ and keeping all of them apparent. To make $F$ and $Q$ change continuously under the deformation, one can parametrize~$F$ and a~solution~$y(z)$ of~\eqref{1a} defining~$Q$ by their data at a fixed regular point.

For generic $\alpha$, $\gamma$, $\delta$, the functions
\begin{gather*}
F_1(z) = \sum_{n=0}^\infty \frac{\Gamma(n-\gamma) \Gamma(n-\delta)}{\Gamma(n+1) \Gamma(n-\alpha)} z^n = \frac{\Gamma(-\alpha)}{\Gamma(-\gamma) \Gamma(-\delta)} \,{}_2F_1(-\gamma,-\delta;-\alpha;z)
\end{gather*}
and
\begin{gather*}
F_2(z)= \sum_{n=0}^\infty
\frac{\Gamma(n+1+\alpha-\gamma) \Gamma(n+1+\alpha-\delta)} {\Gamma(n+2+\alpha) \Gamma(n+1)} z^{n+1+\alpha}\\
\hphantom{F_2(z)}{} = \frac{\Gamma(2+\alpha)} {\Gamma(1+\alpha-\gamma) \Gamma(1+\alpha-\delta)} z^{1+\alpha} \, {}_2F_1(1+\alpha-\gamma,1+\alpha-\delta;2+\alpha;z)
\end{gather*}
are independent solutions of the hypergeometric equation \eqref{2a}, see for example \cite{Mellin}. The functions
\begin{gather*}
f_1(z) = Q\left({z\frac{{\rm d}}{{\rm d}z}}\right) F_1(z) = \sum_{n=0}^\infty \frac{\Gamma(n-\gamma) \Gamma(n-\delta)}{\Gamma(n+1) \Gamma(n-\alpha)} Q(n) z^n,\\
f_2(z) = Q\left({z\frac{{\rm d}}{{\rm d}z}}\right) F_2(z) = \sum_{n=0}^\infty\frac{\Gamma(n+1+\alpha-\gamma) \Gamma(n+1+\alpha-\delta)}{\Gamma(n+2+\alpha) \Gamma(n+1)} Q(n+1+\alpha) z^{n+1+\alpha},
\end{gather*}
coincide with the series~\eqref{8a} for function (\ref{10}) and $\xi=0$ or $\xi=1+\alpha$. Since the series converge for $|z|<1$, both $f_1(z)$ and~$f_2(z)$ are solutions of~(\ref{1a}).

This proves Theorem~\ref{theorem1}.

\section{Application to metrics of positive curvature}\label{section6}

Derivation of Theorem~\ref{theorem3} from Theorems~\ref{theorem1} and~\ref{theorem2} consists of standard arguments which we outline here.

Suppose that we have a Riemannian metric of curvature $1$ on the punctured sphere $S\backslash\{ z_1\lc z_n\}$, and conic singularities at~$z_j$. Conic singularity at~$z_j$ means that a neighborhood of~$z_j$ is isometric to a cone with intrinsic angle $2\pi\alpha_j>0$ at the vertex, and curvature~$1$ at all other points. It is well-known that a surface of constant curvature $1$ is locally isometric to the standard unit sphere. Consider this local isometry $f$ in a neighborhood of a non-singular point. As~$f$ is an isometry, it is conformal, thus analytic, and admits an analytic continuation along any curve which does not pass through the singularities. We obtain a multi-valued {\em developing map}
\begin{gather*}
f\colon \ S\backslash\{ z_1\lc z_n\}\to\bC.
\end{gather*}
Developing map extends continuously (in the sense of radial limits) to the singularities. As the sphere has only one conformal structure, $S$ is also the Riemann sphere. The monodromy of our map consists of rotations of the sphere which can be represented by fractional linear transformations in ${\rm PSU}(2)$. The Schwarzian derivative
\begin{gather*}
\{ f,z\}:=\frac{f'''}{f'}-\frac{3}{2}\left(\frac{f''}{f'}\right)^2
\end{gather*}
remains unchanged when $f$ undergoes a fractional-linear transformation, so it is a single-valued analytic map from $S\backslash\{z_1\lc z_n\}$ to $\bC$. Consideration near a singularity shows that $f(z)=(c+o(1))z^\alpha$ in the local coordinate with angle $2\pi\alpha$. Computing the Schwarzian we obtain that it has a double pole with the principal term
\begin{gather*}
\frac{1-\alpha^2}{2z^2},
\end{gather*}
so we have a third order differential equation for $f$:
\begin{gather}\label{sch}
\{ f,z\}=\sum_{j=1}^n\frac{1-\alpha_j^2}{2(z-z_j)^2}+2\frac{\beta_j}{z-z_j}.
\end{gather}
When $f$ is a developing map, the monodromy of this differential equation must be unitarizable, which means that there is at least one solution with monodromy in ${\rm PSU}(2)$. Developing maps correspond to the
same metric if they are related by $f_1=L\circ f_2$ where $L\in {\rm PSU}(2)$. The general solution of \eqref{sch} consists of fractional-linear transformations of one solution.

So different solutions of the same equation can define different metrics if they are related by $f_1=L\circ f_2$ where $L$ satisfies the condition that $\phi^{-1}L\phi$ is unitary for all $\phi$ from the projective monodromy group. This is only possible when $L$ itself is in ${\rm PSU}(2)$ or when the monodromy is coaxial.

Finally we mention that the general solution of \eqref{sch} is the ratio of two linearly independent solutions of \eqref{1b}.

\begin{proof}[Proof of Theorem~\ref{theorem3}] Suppose that condition \eqref{coaxial} holds. Then Theorem~\ref{theorem1} is applicable. By Theorem~\ref{theorem1}, equations \eqref{1a} and \eqref{2a} have the same monodromy. Monodromy of the hypergeometric equation \eqref{2a} depends only on the exponents, and under the assumption \eqref{coaxial}, it is unitarizable if and only if the exponents satisfy \eqref{cond}, as it is proved in \cite{E, FKKRUY}. So condition~\eqref{cond} is necessary and sufficient for unitarizability
of the monodromy when \eqref{coaxial} is satisfied.

Now if \eqref{cond} holds, the monodromy is not co-axial, so every
equation \eqref{1a} with apparent singularities and unitary monodromy defines exactly one metric. So the statement on the number of the metrics follows from Theorem~\ref{theorem2}.
\end{proof}

\begin{Remark}
A referee suggested another proof of Theorem~\ref{theorem3} without using Theorem~\ref{theorem1}. Suppose that $A_1$, $A_2$, $A_3$ are matrices in ${\rm SL}(2)$ which generate a non-commutative group and satisfy $A_1A_2A_3=I$ and $t_j:=\tr A_j\in(-2,2)$. Then a necessary and sufficient condition of simultaneous unitarizability of these matrices is{\samepage
\begin{gather}\label{u}
t_1^2+t_2^2+t_3^2-t_1t_2t_3<4.
\end{gather}
This statement is well-known and easy to prove, see, for example \cite[Theorem~3.7]{BS}.}

Consider now an equation with the Riemann scheme~(\ref{rs}) where $\alpha_1$, $\alpha_2$, $\alpha_3$ are not integers while all other $\alpha_j$ are integer and the singularities at $z_j$, $4\leq j\leq n$, and infinity are apparent. Let~$M_j$ be the monodromy matrices. Then it is easy to see that $\det M_j=1$ for all singularities, $M_j=(-I)^{\alpha_j-1}$ for all apparent singularities including infinity, where $\alpha_\infty=1$, and $\tr M_i=-\cos\pi\alpha_i$ for $i=1,2,3$. Thus $M_1M_2M_3=(-I)^{\sigma-1}$, and applying condition~\eqref{u} to the matrices $A_1=M_1$, $A_2=M_2$, $A_3=(-1)^{\sigma-1}M_3$, we obtain a necessary and sufficient condition of unitarizability, which is nothing but~\eqref{cond}.
\end{Remark}

\subsection*{Acknowledgments}

A.~Eremenko was supported by NSF grant DMS-1665115. V.~Tarasov was supported in part by Simons Foundation grant 430235. We thank Andrei Gabrielov for illuminating discussions of this paper and the referees whose remarks improved the exposition.

\pdfbookmark[1]{References}{ref}
\LastPageEnding

\end{document}